\def\ps@pprintTitle{%
 \let\@oddhead\@empty
 \let\@evenhead\@empty
 \def\@oddfoot{\centerline{\thepage}}%
 \let\@evenfoot\@oddfoot}
\newtheorem{theorem}{Theorem}
\newtheorem{proposition}[theorem]{Proposition}
\newtheorem{lemma}[theorem]{Lemma}
\theoremstyle{definition}
\newtheorem{definition}[theorem]{Definition}
\newtheorem*{problem*}{Problem}
\newtheorem{problem}[theorem]{Problem}
\newtheorem{example}[theorem]{Example}
\DeclarePairedDelimiter\floor{\lfloor}{\rfloor}
\DeclareMathOperator{\rank}{R}
\DeclareMathOperator{\borderrank}{\underline{R}}
\DeclareMathOperator{\rad}{rad}
\newcommand{\CC}{\mathbb{C}}
\newcommand{\extbinom}[3]{\binom{#1}{#2}_{\!\!#3}}
\begin{document}

\begin{frontmatter}
\title{A note on the gap between rank and border rank}
\author{Jeroen Zuiddam}
\ead{j.zuiddam@cwi.nl}
\address{Centrum Wiskunde \& Informatica, Science Park 123, Amsterdam}

\begin{abstract}
We study the tensor rank of the tensor corresponding to the algebra of $n$-variate complex polynomials modulo the $d$th power of each variable.
As a result we find a sequence of tensors with a large gap between rank and border rank, and thus a counterexample to a conjecture of Rhodes. At the same time we obtain a new lower bound on the tensor rank of tensor powers of the generalised W-state tensor. In addition, we  exactly determine the tensor rank of the tensor cube of the three-party W-state tensor, thus answering a question of Chen et al.
\end{abstract}

\begin{keyword}
tensor rank \sep border rank \sep algebraic complexity theory \sep quantum information theory \sep W-state.
\MSC[2010] 68Q17 \sep 15A69 \sep 16Z05
\end{keyword}
\end{frontmatter}

\section{Introduction}

Let $V_1,\ldots,V_k$ be finite-dimensional complex vector spaces and consider the vector space $V\coloneqq V_1 \otimes \cdots \otimes V_k$  of $k$-tensors. A tensor of the form $v_1\otimes v_2 \otimes \cdots \otimes v_k$ in $V$ is called \emph{simple}. The \emph{tensor rank} of a tensor~$t\in V$ is the smallest number~$r$ such that $t$ can be written as a sum of $r$ simple tensors. The \emph{border rank}~$\borderrank(t)$ of~$t$ is the smallest number $r$ such that $t$ is the limit of a sequence of tensors in $V$ of rank at most $r$, in the Euclidean topology. Clearly, $\borderrank(t) \leq \rank(t)$ and already in the small space $\CC^2 \otimes \CC^2 \otimes \CC^2$ there exist tensors with $\borderrank(t) < \rank(t)$.

Tensor rank plays a fundamental role in various problems in modern applied mathematics. One famous example is the problem of deciding the complexity of matrix multiplication \cite{burgisser1997algebraic}. We refer to \cite{jm2012tensors} for more examples of applications of tensor rank. While the tensor rank of a 2-tensor (matrix rank) can be efficiently computed, computing the tensor rank of a $k$-tensor  is NP-hard when $k\geq 3$ \cite{haastad1990tensor,hillar2013most,shitov2016hard}. 
The border rank notion is important for at least the following two reasons. Unlike tensor rank, border rank is defined by polynomial equations. One approach for computing a lower bound for the tensor rank of a tensor is thus to find the relevant border rank equation and then verify that the tensor does not satisfy the equation. This strategy was for example used in \cite{hauenstein2013equations}. On the other hand, border rank upper bounds can in some situations be turned into nontrivial rank upper bounds, especially when one is interested in the asymptotic behaviour of tensor rank when taking large tensor powers of a fixed tensor. This idea is crucial in, for example, the laser method of Strassen~\cite{MR882307} and all later improvements of this method, see for example~\cite{le2014powers}.

This paper is motivated by the following basic question about tensor rank and border rank.

\begin{problem}
What is the maximal ratio $\rank(t)/\borderrank(t)$ for a $k$-tensor $t$ in $(\CC^n)^{\otimes k}$?
\end{problem}

Our main result is the following lower bound. Let $e_0, e_1$ be the standard basis of $\CC^2$. Define $W_k$ to be the tensor $e_1\otimes e_0 \otimes \cdots \otimes e_0\, +\, e_0 \otimes e_1\otimes \cdots \otimes e_0\, +\, \cdots\, +\, e_0 \otimes \cdots \otimes e_0 \otimes e_1$ living in $(\CC^2)^{\otimes k}$. This tensor is known as the generalised W-state tensor in quantum information theory.

\begin{theorem}\label{mainthintro}
Let $k\geq 3$. There exists an explicit sequence of $k$-tensors $t_n$ in $(\CC^{2^n})^{\otimes k}$ such that
\[
\frac{\rank(t_{n})}{\borderrank(t_{n})} \geq k - o(1),
\]
when $n$ goes to infinity. Namely, let $t_n = W_k^{\otimes n} \in (\CC^{2^n})^{\otimes k}\!,$ the $n$-fold tensor Kronecker product of $W_k$. Then $\borderrank(t_n) = 2^n$ and $\rank(t_n) \geq k\cdot 2^n - o(2^n)$, when~$n$ goes to infinity.
\end{theorem}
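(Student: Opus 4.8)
The plan is to establish the two claims $\borderrank(W_k^{\otimes n}) = 2^n$ and $\rank(W_k^{\otimes n}) \geq k\cdot 2^n - o(2^n)$ separately, since the ratio statement follows immediately by dividing the two.

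For the border rank upper bound, I would first recall the classical fact that $\borderrank(W_k) = 2$: the W-state is a well-known example of a tensor of rank $k$ but border rank $2$, realised as a derivative $\lim_{\varepsilon\to 0}\frac{1}{\varepsilon}\bigl((e_0+\varepsilon e_1)^{\otimes k} - e_0^{\otimes k}\bigr)$. Border rank is submultiplicative under the Kronecker product, so $\borderrank(W_k^{\otimes n}) \leq 2^n$. For the matching lower bound I would use a border-rank lower bound that is known to be multiplicative or at least tensor-power-stable — the substitution method does not directly give border rank, but one can invoke a dimension/flattening argument: some flattening of $W_k^{\otimes n}$ has rank exactly $2^n$ (the $1{\times}(k-1)$ or balanced flattening of $W_k$ has rank $2$, and flattening ranks multiply under Kronecker product), and matrix rank of a flattening is a lower bound on border rank. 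This pins down $\borderrank(t_n) = 2^n$ exactly.

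The substance of the theorem is the rank lower bound $\rank(W_k^{\otimes n}) \geq k\cdot 2^n - o(2^n)$, and I expect this to be the main obstacle. The key idea — signalled by the abstract's mention of ``the algebra of $n$-variate complex polynomials modulo the $d$th power of each variable'' — is that $W_k^{\otimes n}$ is (up to the relevant notion of equivalence or restriction) the structure tensor of the commutative algebra $A_n = \CC[x_1,\ldots,x_n]/(x_1^2,\ldots,x_n^2)$, viewed as a $k$-tensor via the multiplication map $A_n^{\otimes(k-1)} \to A_n$. One then wants a good lower bound on the rank of such an algebra's multiplication tensor. For $k=3$ this is the rank of the structure tensor of $A_n$ as a bilinear map, for which there is a powerful lower bound technique (the one coming from Bläser and others, or from the fact that for a local algebra $A$ with maximal ideal $\mathfrak{m}$ one has $\rank \geq 2\dim A - \dim(A/\mathfrak{m}^?) $-type estimates); for general $k$ one needs the appropriate generalisation. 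I would carry this out as follows: (i) identify $W_k^{\otimes n}$ with the $k$-th order multiplication tensor of $A_n$; (ii) apply the substitution method / a Bläser–Strassen-type lower bound adapted to higher-order tensors, using that $\dim A_n = 2^n$ and that $A_n$ has a single long flag of ideals because it is local with nilpotent maximal ideal; (iii) show the resulting bound is $k\cdot 2^n$ minus a lower-order term governed by how the substitution argument degrades as the flag is exhausted. The delicate point is quantifying the $o(2^n)$ loss: each substitution step removes one simple tensor but also shrinks the algebra, and one must argue that the shrinkage stays negligible relative to $2^n$ for all but $o(2^n)$ of the steps — this is where the combinatorics of the ideal lattice of $A_n$ (equivalently, the Boolean lattice / binomial coefficients, hence the macro-suggestive $\binom{n}{k}$ bookkeeping) enters and must be controlled.

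Finally I would assemble the pieces: combining $\rank(t_n) \geq k\cdot 2^n - o(2^n)$ with $\borderrank(t_n) = 2^n$ gives $\rank(t_n)/\borderrank(t_n) \geq k - o(1)$ as $n\to\infty$, which is the claimed bound. I would also remark that the explicitness is automatic since $W_k^{\otimes n}$ is given by an explicit formula, so the sequence $t_n$ is fully explicit as required.
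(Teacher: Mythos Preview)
Your border rank argument is fine and matches the paper's: $\borderrank(W_k)=2$ via the derivative trick, submultiplicativity gives the upper bound $2^n$, and conciseness (equivalently, a flattening of full rank $2^n$) gives the matching lower bound.

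The gap is in your rank lower bound. Your step~(i) is actually correct --- after a basis swap in the last factor, $W_k$ really is the $(k-1)$-ary multiplication tensor of $\CC[x]/(x^2)$, so $W_k^{\otimes n}$ is the $(k-1)$-ary multiplication tensor of $A_n=\CC[x_1,\dots,x_n]/(x_1^2,\dots,x_n^2)$. But step~(ii) does not go through: Bl\"aser's lower bound (the one the paper cites and uses, $\rank(A)\geq \dim A - \dim N^{2m-1}+2\dim N^m$) is a statement about \emph{bilinear} complexity, i.e.\ about the ordinary $3$-tensor structure tensor of an associative algebra. There is no off-the-shelf $k$-tensor analogue yielding $k\cdot\dim A - o(\dim A)$, and ``adapt to higher-order tensors'' hides a genuinely missing ingredient. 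Your outline gives no mechanism by which the substitution/ideal-flag argument would produce the leading coefficient $k$ rather than something weaker.

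The paper sidesteps this entirely. It applies Bl\"aser's bound only in the case $k=3$, where $W_3^{\otimes n}$ \emph{is} the ordinary structure tensor of $A_n$, obtaining $\rank(W_3^{\otimes n})\geq 3\cdot 2^n - o(2^n)$ via the nilradical dimension count (your binomial-coefficient bookkeeping is exactly right here). The passage to general $k$ is then done by a completely separate lemma of Chen et~al., proved by the substitution method on the $k$-tensor directly:
\[
\rank(W_k^{\otimes n}) \;\geq\; \rank(W_3^{\otimes n}) + (k-3)(2^n-1).
\]
Adding this to the $k=3$ bound gives $k\cdot 2^n - o(2^n)$. So the two pieces you tried to fuse into one higher-order algebra argument are, in the paper, handled by two distinct tools: Bl\"aser's $3$-tensor algebra bound for the hard $3\cdot 2^n$ part, and a simple induction on $k$ for the remaining $(k-3)\cdot 2^n$.
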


We obtain Theorem\nobreakspace \ref {mainthintro} by applying a tensor rank lower bound of Bläser to the tensor corresponding to the algebra $A_{d,n} \coloneqq\CC[x_1,\ldots,x_n]/(x_1^d,\ldots,x_n^d)$ of $n$-variate complex polynomials modulo the $d$th power of each variable.
This in turn leads to the aforementioned lower bound on the tensor rank of tensor powers of the generalised W-state tensor $W_k$. Our bound improves the lower bound $\rank(W_k^{\otimes n}) \geq (k-1)\cdot 2^n - k + 2$ of Chen et al.~\cite{chen2010tensor}.

We note that it is a major open problem to find explicit tensors $t \in (\CC^n)^{\otimes 3}$ with $\rank(t) \geq (3+\varepsilon)n$ for any $\varepsilon >0$ \cite{blaser2014explicit}. There are explicit tensors $t \in (\CC^n)^{\otimes 3}$ known with $\rank(t) \geq (3-o(1)) n$ when $n$ goes to infinity, see \cite[Theorem~2]{MR2065853}.

\paragraph{Related work}
De Silva and Lim show that for a 3-tensor $t$ the difference between tensor rank and border rank $\rank(t)-\borderrank(t)$ can be arbitrarily large \cite{de2008tensor}. However, their result implies a lower bound of only $3/2$ on the maximal ratio $\rank(t)/\borderrank(t)$ for $t$ a 3-tensor.

Allman et al.\ give explicit tensors $K_n$ in $\CC^n\otimes \CC^n\otimes \CC^n$ of border rank~$n$ and rank $2n-1$ \cite{allman2013tensor}; a rank to border rank ratio that converges to~2. They provide references to other tensors with similar rank and border rank behaviour. We note that the tensor $K_n$ is essentially the tensor of the algebra $\CC[x]/(x^n)$. It was conjectured by Rhodes that the rank of a tensor in~$\CC^n\otimes \CC^n \otimes \CC^n$ of border rank~$n$ is at most $2n-1$ {\cite[Conjecture~0]{ballico2013stratification}}. Theorem\nobreakspace \ref {mainthintro} shows that this conjecture is false.

Independently of the author and with different techniques, Landsberg and Michałek have recently constructed a sequence of 3-tensors 
 with a ratio of rank to border rank converging to $5/2$, thus also disproving the above conjecture~\cite{2015arXiv150403732L}.

As is also mentioned in \cite{2015arXiv150403732L}, we note that for any $k\geq 3$, the tensor $W_k\in (\CC^2)^{\otimes k}$ has border rank $2$ and rank $k$, thus giving a rank to border rank ratio of~$k/2$, see the proof of Theorem\nobreakspace \ref {mainthintro}.

As pointed out by an anonymous reviewer, a lower bound on the maximal ratio between rank and border rank for 3-tensors ($k=3$) similar to the one in Theorem\nobreakspace \ref {mainthintro} in this paper, can also be obtained as follows. Let $I_d(x_1, \ldots, x_n) \subseteq \CC[x_1, \ldots, x_n]$ be the ideal generated by all monomials of degree~$d$. Bläser~\cite{MR2065853} proves that the tensor corresponding to the algebra $P_{n,d}$ defined as  $\CC[x_1, \ldots, x_n]/I_{d+1}(x_1, \ldots, x_n)$ has tensor rank $\rank(P_{n,d}) \geq (3-o(1)) \dim(P_{n,d})$. Moreover, the ideal $I_{d+1}(x_1, \ldots, x_n)$ is a so-called monomial ideal and is therefore ``smoothable''. It turns out (see \cite{blser_et_al:LIPIcs:2016:6434}) that associative unital algebras defined by smoothable ideals (like $P_{n,d}$) have ``minimal border rank'' which in this case means that $\borderrank(P_{n,d}) = \dim P_{n,d}$. Combining these observations yields, for any $d>1$, an explicit sequence of 3-tensors $t_n  \in (\CC^{\dim(P_{n,d})})^{\otimes 3}$ such that $\rank(t_n)/ \borderrank(t_n) \geq 3 - o(1)$ when $n$ goes to infinity. Note that the algebra $P_{n,d}$ is slightly different from the algebra $A_{d,n}$ that we study here.

Very little is known about general upper bounds on the rank to border rank ratio. We are only aware of the following bound that can be deduced from a result of Lehmkuhl and Lickteig \cite{lehmkuhl1989order} and Proposition 15.26 in \cite{burgisser1997algebraic}. For any tensor $t\in \CC^n \otimes \CC^n \otimes \CC^n$ we have $\rank(t)/\borderrank(t) \leq 2 \cdot 9^{(n-1)\borderrank(t)} + 1$.

\paragraph{Outline} This paper is organised as follows. First we introduce the algebra $A_{d,n}$ and for the corresponding tensor study its border rank and tensor rank. Then we observe that this tensor specialises to powers of the W-state tensor, yielding the gap between rank and border rank given above. Finally, we compute the tensor rank of the tensor cube of the three-party W-state tensor.

\section{The algebra $A_{d,n}$}
Many examples of interesting 3-tensors come from algebras. A complex algebra is a complex vector space $V$ together with a \emph{multiplication} defined by a bilinear map $\phi:V \times V \to V$. An algebra is called \emph{associative} if $\phi(\phi(u,v),w) = \phi(u,\phi(v,w))$ for all $u,v,w\in V$. An algebra is called \emph{unital} if there is an element $e\in V$ such that $\phi(e,v) = \phi(v,e) = v$ for all $v\in V$.
Let $e_1, e_2, \ldots$ be a basis of~$V$ and $e_1^*, e_2^*, \ldots$ the dual basis. We can naturally view the algebra $(V,\phi)$ as a tensor in $V\otimes V\otimes V$ by
\[
\phi \mapsto \sum_{i,j,k} e_k^*(\phi(e_i,e_j))\, e_i\otimes e_j\otimes e_k,
\]
called the \emph{structure tensor}. In this way we can speak about the tensor rank and border rank of an algebra.  There are many results on the tensor rank and border rank of algebras, in particular of the algebra of $n\times n$ matrices, for which we refer to \cite{burgisser1997algebraic} and \cite{blaser2000lower}. For results on the tensor rank and border rank of general tensors we refer to \cite{jm2012tensors}. In this section we will study the complex associative unital algebra
\[
A_{d,n} \coloneqq (\CC[x]/(x^d))^{\otimes n} = \CC[x_1,\ldots,x_n]/(x_1^d,\ldots,x_n^d),
\]
of $n$-variate complex polynomials modulo the $d$th power of each variable.

\subsection{Border rank}
 A tensor $t$ in $V_1\otimes \cdots \otimes V_k$ is called 1-\emph{concise} if there does not exist a proper subspace $U_1 \subseteq V_1$ such that $t \in U_1 \otimes V_2 \otimes\cdots \otimes V_k$. Similarly, we define $i$-conciseness for $i\in \{2,\ldots,k\}$. A tensor is called \emph{concise} if it is $i$-concise for all $i$. We can think of a concise tensor as a tensor that ``uses'' all dimensions of the local spaces $V_i$. Tensors of unital algebras are concise. For a concise tensor~$t$ in $V_1\otimes\cdots\otimes V_k$ the border rank is at least $\max_i \dim V_i$ \cite[Lemma 15.23]{burgisser1997algebraic}. The following proposition is a direct consequence of the well-known fact that $\borderrank(\CC[x]/(x^d)) =  d$, see \cite[Example 15.20]{burgisser1997algebraic}.

\begin{proposition}\label{borderrank}
$\borderrank(A_{d,n})=d^n$. 
\end{proposition}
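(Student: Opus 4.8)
The plan is to prove $\borderrank(A_{d,n}) = d^n$ by establishing matching upper and lower bounds. For the lower bound, I would invoke the fact quoted just before the proposition: since $A_{d,n}$ is a unital algebra, its structure tensor is concise, and a concise tensor in $V_1 \otimes V_2 \otimes V_3$ has border rank at least $\max_i \dim V_i = \dim A_{d,n} = d^n$. (Here all three local spaces have dimension $d^n$, so the bound is exactly $d^n$.) This half requires nothing beyond citing \cite[Lemma 15.23]{burgisser1997algebraic}.

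For the upper bound, the key structural observation is multiplicativity of border rank under the Kronecker (tensor) product of algebras: if $A$ and $B$ are algebras with structure tensors $t_A \in A^{\otimes 3}$ and $t_B \in B^{\otimes 3}$, then the structure tensor of $A \otimes B$ is $t_A \boxtimes t_B$, the Kronecker product that groups the three $A$-factors with the three $B$-factors pairwise, and $\borderrank(t_A \boxtimes t_B) \leq \borderrank(t_A)\cdot \borderrank(t_B)$. This submultiplicativity is the standard fact that a limit of rank-$r$ approximations of $t_A$ tensored with a limit of rank-$s$ approximations of $t_B$ gives a rank-$rs$ approximation of $t_A \boxtimes t_B$. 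Since $A_{d,n} = (\CC[x]/(x^d))^{\otimes n}$ by definition, iterating this gives $\borderrank(A_{d,n}) \leq \borderrank(\CC[x]/(x^d))^n$. Combined with the well-known evaluation $\borderrank(\CC[x]/(x^d)) = d$ from \cite[Example 15.20]{burgisser1997algebraic}, we get $\borderrank(A_{d,n}) \leq d^n$.

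Putting the two bounds together yields $\borderrank(A_{d,n}) = d^n$. The only point that needs care — and what I expect to be the main (mild) obstacle — is being precise that the structure tensor of the tensor-product algebra is genuinely the Kronecker product of the structure tensors under the correct identification of bases (the monomial basis of $A_{d,n}$ corresponds to the tensor product of the monomial bases $1, x, \ldots, x^{d-1}$ of each factor), so that the submultiplicativity of border rank applies verbatim. Everything else is an appeal to the two cited facts.
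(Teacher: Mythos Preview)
Your proposal is correct and follows essentially the same approach as the paper: conciseness of the unital algebra gives the lower bound $d^n$, and submultiplicativity of border rank under the Kronecker product together with $\borderrank(\CC[x]/(x^d))=d$ gives the matching upper bound. The extra care you take in identifying the structure tensor of the tensor-product algebra with the Kronecker product of the factor structure tensors is a welcome elaboration but not a departure from the paper's argument.
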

\begin{proof}
The algebra $A_{d,n}$ is unital. Therefore, the corresponding tensor $A_{d,n} \in \CC^{d^n}\otimes\CC^{d^n}\otimes\CC^{d^n}$ is concise. This implies that $\borderrank(A_{d,n}) \geq d^n$. On the other hand, border rank is submultiplicative under tensor products, so $\borderrank(A_{d,n}) = \borderrank((\CC[x]/(x^d))^{\otimes n}) \leq \borderrank(\CC[x]/(x^d))^n = d^n$. 
\end{proof}

\subsection{Rank lower bound}
Let $(V, \phi)$ be a complex finite-dimensional associative unital algebra. A subspace $I\subseteq V$ is called a \emph{left-ideal} if $\phi(V,I)=I$. A left-ideal $I$ is called \emph{nilpotent} if $I^n=\{0\}$ for some positive integer $n$. The \emph{nilradical} of $V$ is the sum of all nilpotent left-ideals in $A$.

\begin{theorem}[{\cite[Theorem 7.4]{blaser2000lower}}]\label{thblaser} Let $A$ be a finite-dimensional complex associative unital algebra and let $N$ be the nilradical of $A$. For any integer~$m\geq 1$,
\[
\rank(A) \geq \dim(A) - \dim (N^{2m-1}) + 2\dim (N^{m}).
\]
\end{theorem}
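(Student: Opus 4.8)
I would prove this by the substitution method. Since $A$ is unital, its structure tensor $t\in A^{*}\otimes A^{*}\otimes A$ is concise, which already yields $\rank(A)\geq\dim A$; the task is to account for $2\dim N^{m}-\dim N^{2m-1}$ further rank-one terms. Fix an optimal expression $xy=\sum_{\rho=1}^{r}f_{\rho}(x)\,g_{\rho}(y)\,w_{\rho}$ with $r=\rank(A)$. The basic tool is the one-variable substitution lemma: if a $3$-tensor does not lie in $U'\otimes(\cdot)\otimes(\cdot)$ for a hyperplane $U'$ of the first factor, then one may restrict that factor to such a hyperplane at the cost of at least one in the rank. Applied to $t$, restricting the first factor to a hyperplane amounts to precomposing the multiplication, in its first argument, with a projection of $A$ onto a hyperplane, the hyperplane being adversarial subject only to not containing a vector of our choosing.

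The argument would run in three stages. First, I restrict the first argument from $A$ down to a complement $C_{1}$ of $N^{m}$ by peeling off $\dim N^{m}$ directions one at a time: at each step I remove a vector $z$ lying in $N^{m}$ intersected with the current subspace, which is legitimate because the corresponding slice is nonzero ($1\mapsto z\cdot 1=z\neq0$) while the second argument still ranges over all of $A$; a dimension count along the resulting flag shows the final subspace meets $N^{m}$ trivially, so $A=C_{1}\oplus N^{m}$. Second, I do the same to the second argument, obtaining a complement $C_{2}$ of $N^{m}$; the slices are nonzero here because $C_{1}z\neq0$ for every $z\neq0$ (otherwise $Az=N^{m}z=N^{2m}z=\cdots=0$, forcing $z=0$). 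These two stages account for $2\dim N^{m}$ rank-one terms and leave the multiplication map $C_{1}\times C_{2}\to A$, $(a,b)\mapsto ab$.

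Third, I show that $\rank(C_{1}\times C_{2}\to A)\geq\dim A-\dim N^{2m-1}$ by proving that composing this map with the quotient $A\to A/N^{2m-1}$ is surjective; a bilinear map with spanning image into a space $Z$ has rank at least $\dim Z$, which completes the count $\rank(A)\geq 2\dim N^{m}+(\dim A-\dim N^{2m-1})$. The content is the identity $C_{1}C_{2}+N^{2m-1}=A$ for arbitrary complements $C_{1},C_{2}$ of $N^{m}$, and this is the step I expect to be the crux. One first notes $C_{1}C_{2}+N^{m}=A$: the unique $c_{i}\in C_{i}$ with $c_{i}\equiv1\pmod{N^{m}}$ is a unit and satisfies $c_{1}C_{2}\equiv C_{2}\pmod{N^{m}}$. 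Then one proves by induction on $j$, for $m\leq j\leq 2m-2$, that $N^{j}\subseteq C_{1}C_{2}+N^{j+1}$: a product of $j$ radical elements is split as a length-$(m-1)$ factor $a\in N^{m-1}$ times a length-$(j-m+1)$ factor $b\in N^{j-m+1}$; writing $a=a_{W}+a'$, $b=b_{W}+b'$ with $a_{W}\in C_{1}\cap N^{m-1}$, $b_{W}\in C_{2}\cap N^{j-m+1}$ and $a',b'\in N^{m}$, the cross terms land in $N^{m-1}N^{m}=N^{2m-1}$, in $N^{m}N^{j-m+1}=N^{j+1}$, and in $N^{2m}$, all inside $N^{j+1}$ because $j\leq 2m-2$, leaving only $a_{W}b_{W}\in C_{1}C_{2}$. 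Iterating from $C_{1}C_{2}+N^{m}=A$ up to $j=2m-2$ gives $C_{1}C_{2}+N^{2m-1}=A$. It is precisely in forcing all the cross terms into $N^{j+1}$ that the exponent $2m-1$ (rather than $2m$) is dictated; the remaining care is the filtration bookkeeping, together with checking that the substitution steps genuinely produce complements of $N^{m}$ and never meet a vanishing slice.
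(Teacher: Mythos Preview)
The paper does not prove this statement; it quotes the result from Bl\"aser and uses it as a black box, so there is no in-paper proof to compare your attempt against.

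Your sketch is correct and is essentially Bl\"aser's own argument. The three-stage substitution---peeling $\dim N^{m}$ directions off each of the first two factors via the one-variable substitution lemma, then lower-bounding the residual bilinear map $C_{1}\times C_{2}\to A$ by the dimension of its image modulo $N^{2m-1}$---matches the structure of the original proof, and your identification of the identity $C_{1}C_{2}+N^{2m-1}=A$ (for \emph{arbitrary} complements $C_{i}$ of $N^{m}$) as the crux, established via the filtration step $N^{j}\subseteq C_{1}C_{2}+N^{j+1}$ for $m\le j\le 2m-2$, is exactly the key lemma there. The one place a reader might pause is the compressed justification ``$Az=N^{m}z=N^{2m}z=\cdots=0$'' in Stage~2; it unpacks cleanly once one notes that $z\in Az=N^{m}z$ forces $z=nz$ for some $n\in N^{m}$, hence $(1-n)z=0$ with $1-n$ a unit, so $z=0$. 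Everything else is in order.
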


We will apply Theorem\nobreakspace \ref {thblaser} to the algebra $A_{d,n}$. Let us first look at a small example.

\begin{example}
Consider the algebra $A\coloneqq A_{2,2} = \CC[x_1,x_2]/(x_1^2,x_2^2)$ of dimension~4. The elements in~$A$ are of the form $\alpha + \beta x_1 + \gamma x_2 + \delta x_1x_2$ with $\alpha,\beta,\gamma,\delta\in \CC$. The nilradical $N\subseteq A$ is the subspace spanned by $\{x_1, x_2,x_1x_2\}$, and hence has dimension 3. The square of the nilradical $N^2$ is spanned by $x_1x_2$ and hence has dimension 1. Taking $m=1$, Theorem\nobreakspace \ref {thblaser} gives $\rank(A) \geq 4 - 3 + 2\cdot 3 = 7$.
\end{example}

We use extended binomial coefficients to get a handle on the dimension of powers of the radical of $A_{d,n}$. Let $\extbinom{n}{b}{\,d}$ be the number of ways to put $b$ balls into $n$ containers with at most $d$ balls per container. This equals the number of monomials of degree~$b$ in $\CC[x_1,\ldots,x_n]/(x_1^{d+1},\ldots,x_n^{d+1})$.

\begin{lemma}\label{asympt}
For any $0\leq q < 1/2$,
\[
\dfrac{\displaystyle\sum_{b=0}^{\floor{qnd}} \extbinom{n}{b}{d}}{\raisebox{-0.2em}{$\displaystyle(d+1)^n$}} \,\to\, 0 \quad\textnormal{ as }\quad n \to \infty.
\]
\end{lemma}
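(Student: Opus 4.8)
The plan is to reinterpret the ratio as a tail probability for a sum of independent uniform random variables and then apply Chebyshev's inequality. First I would let $X_1,\dots,X_n$ be independent random variables, each uniformly distributed on the set $\{0,1,\dots,d\}$, and set $X \coloneqq X_1+\cdots+X_n$. Since $\extbinom{n}{b}{d}$ counts the tuples $(b_1,\dots,b_n)$ with $0\le b_i\le d$ and $\sum_i b_i = b$, we have $\Pr[X=b] = \extbinom{n}{b}{d}/(d+1)^n$ for every $b$, and hence
\[
\frac{\displaystyle\sum_{b=0}^{\floor{qnd}}\extbinom{n}{b}{d}}{(d+1)^n} \;=\; \Pr\bigl[X\le\floor{qnd}\bigr] \;\le\; \Pr[X\le qnd].
\]

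Next I would record the first two moments of $X$. Each $X_i$ has mean $d/2$ and variance $\sigma^2\coloneqq\frac{(d+1)^2-1}{12}=\frac{d(d+2)}{12}$ (the variance of a uniform distribution on $d+1$ consecutive integers), so by independence $\mathbb{E}[X]=nd/2$ and $\operatorname{Var}(X)=n\sigma^2$. The key point is that the hypothesis $q<1/2$ places the threshold $qnd$ a \emph{linear} distance below the mean: $\mathbb{E}[X]-qnd=(\tfrac12-q)\,nd$, with $\tfrac12-q>0$ a fixed positive constant.

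Chebyshev's inequality then finishes the argument:
\[
\Pr[X\le qnd] \;\le\; \Pr\bigl[\,\abs{X-\mathbb{E}[X]}\ge(\tfrac12-q)nd\,\bigr] \;\le\; \frac{n\sigma^2}{(\tfrac12-q)^2n^2d^2} \;=\; \frac{d+2}{12(\tfrac12-q)^2 d}\cdot\frac1n,
\]
which tends to $0$ as $n\to\infty$ for fixed $d$. I do not expect a genuine obstacle here: this is a routine second-moment concentration estimate, and the only points requiring (minor) care are the variance of the discrete uniform distribution and the harmless rounding in $\floor{qnd}$, which can only decrease the left-hand side.

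If one wanted exponential decay instead of the $O(1/n)$ bound above — which is more than the application requires — one could alternatively start from the generating-function identity $\sum_{b}\extbinom{n}{b}{d}z^b=(1+z+\cdots+z^d)^n$, bound $\sum_{b\le qnd}\extbinom{n}{b}{d}\le z^{-qnd}(1+z+\cdots+z^d)^n$ for $z\in(0,1)$, and check that the resulting base $(1+z+\cdots+z^d)/((d+1)z^{qd})$ drops below $1$ for $z$ slightly less than $1$, since its logarithmic derivative at $z=1$ equals $d(\tfrac12-q)>0$.
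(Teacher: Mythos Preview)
Your argument is correct. The probabilistic reinterpretation is sound: the identity $\Pr[X=b]=\extbinom{n}{b}{d}/(d+1)^n$ holds because $\extbinom{n}{b}{d}$ counts compositions of $b$ into $n$ parts each in $\{0,\dots,d\}$, your moment computation is right, and Chebyshev gives the $O(1/n)$ bound stated. The rounding point is handled correctly.

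Your route is genuinely different from the paper's. The paper invokes a large-deviations fact from \cite{fahssi2012polynomial}: the limit $h^d(\rho)=\lim_{n\to\infty}\tfrac1n\ln\extbinom{n}{\rho n}{d}$ exists, is unimodal in $\rho$, and attains its maximum $\ln(d+1)$ only at $\rho=d/2$. Since $q<1/2$ forces $\rho=qd<d/2$, one has $h^d(\rho)<\ln(d+1)$, and bounding the sum by $(qnd+1)$ times its largest term yields exponential decay. Your Chebyshev argument is more elementary and fully self-contained, at the cost of a weaker $O(1/n)$ rate; the paper's approach imports an external entropy result but gets exponential decay for free. Your suggested generating-function alternative is essentially a Chernoff bound and would recover the exponential rate without the citation --- and is in fact cleaner than the paper's version, which is a bit loose about which term is the largest in the sum. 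For the application in Proposition~\ref{maincor}, either rate suffices.
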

\begin{proof}
Fix $n,d$. The limit
\[
h^d(\rho) \coloneqq \lim_{n\to\infty} \frac1n \ln \extbinom{n}{\rho n}{d}, \quad 0\leq \rho \leq d,
\]
exists, the function $h^d$ is strictly concave, unimodal and reaches its maximum $\ln(m+1)$ at the point~$\rho=d/2$ \cite{fahssi2012polynomial}. Let $0\leq q < 1/2$ and let $\rho\coloneqq qd$. 
For all $\rho < d/2$, $h^d(\rho) < \ln(d+1)$, so there is an $\varepsilon > 0$ such that $h^d(\rho) + \varepsilon < \ln(d+1)$. For $n$ big enough,
\[
\extbinom{n}{\rho n}{d} \leq \exp\bigl((h^d(\rho) + \varepsilon)n\bigr)
\]
and thus
\[
\frac{\sum_{b=0}^{\floor{qnd}} \extbinom{n}{b}{\,d}}{(d+1)^n} \leq (qnd + 1) \exp\bigl(( h^d(\rho)+\varepsilon -\ln(d+1))n\bigr),
\]
which goes to zero as $n$ goes to infinity.

We note that the case $d=1$ can also easily be obtained from the well-known inequality
\[
\sum_{b=0}^{\floor{qn}} \binom{n}{b} \leq 2^{H(q)n},
\]
where $H(q) = -q\log_2 q - (1-q)\log_2(1-q)$ is the binary entropy of $q$, see for example~\cite[Lemma 16.19]{flum2006parameterized}.
\end{proof}

\begin{proposition}\label{maincor}
Let $n\geq 1,d\geq2$ be integers. Then 
\[
\rank(A_{d,n}) \,\geq\, 2\, d^n + \,\,\max_{m\geq 1}\,\Biggl[\,\,\sum_{b=0}^{2m-2} \extbinom{n}{b}{d-1} \!\!-\,\, 2\, \sum_{b=0}^{m-1} \extbinom{n}{b}{d-1} \Biggr] \geq\, 3 d^n - o(d^n).
\]
\end{proposition}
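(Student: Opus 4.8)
The plan is to apply Bläser's bound (Theorem~\ref{thblaser}) to $A \coloneqq A_{d,n}$, so that everything reduces to the dimensions of the powers of its nilradical. Since $A_{d,n}$ is a commutative Artinian local ring, its nilradical $N$ is the unique maximal ideal $(x_1,\dots,x_n)$, spanned by the nonzero monomials $x_1^{a_1}\cdots x_n^{a_n}$ with $0\le a_i\le d-1$; and since $N$ is generated as an ideal by $x_1,\dots,x_n$, the power $N^j$ is spanned by the monomials of degree at least $j$. Writing $\extbinom{n}{b}{d-1}$ for the number of degree-$b$ monomials with every exponent at most $d-1$ (so that $\sum_{b}\extbinom{n}{b}{d-1} = d^n = \dim A_{d,n}$), this gives
\[
\dim N^j \;=\; \sum_{b=j}^{(d-1)n}\extbinom{n}{b}{d-1} \;=\; d^n - \sum_{b=0}^{j-1}\extbinom{n}{b}{d-1}.
\]
Substituting $j = 2m-1$ and $j = m$ into Theorem~\ref{thblaser} yields, for each $m\ge 1$,
\[
\rank(A_{d,n}) \;\ge\; \dim A - \dim N^{2m-1} + 2\dim N^m \;=\; 2d^n + \sum_{b=0}^{2m-2}\extbinom{n}{b}{d-1} - 2\sum_{b=0}^{m-1}\extbinom{n}{b}{d-1},
\]
and maximising over $m$ is precisely the first inequality of the Proposition.

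For the asymptotic bound I would choose $m$ growing with $m/((d-1)n)\to c$ for some fixed $c\in(\tfrac14,\tfrac12)$, say $c=\tfrac38$. Then $(m-1)/((d-1)n)\to c<\tfrac12$, so Lemma~\ref{asympt}, applied with its parameter $d$ replaced by $d-1$ (matching $d^n=(d-1+1)^n$), gives $\sum_{b=0}^{m-1}\extbinom{n}{b}{d-1}=o(d^n)$. For the longer sum, use that $b\mapsto\extbinom{n}{b}{d-1}$ is palindromic, $\extbinom{n}{b}{d-1}=\extbinom{n}{(d-1)n-b}{d-1}$, so
\[
\sum_{b=0}^{2m-2}\extbinom{n}{b}{d-1} \;=\; d^n - \sum_{b=0}^{(d-1)n-2m+1}\extbinom{n}{b}{d-1},
\]
and since $2c<1$ the ratio $((d-1)n-2m+1)/((d-1)n)\to 1-2c\in(0,\tfrac12)$, so a second application of Lemma~\ref{asympt} makes the subtracted sum $o(d^n)$. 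Plugging both estimates into the displayed lower bound gives $\rank(A_{d,n})\ge 2d^n+(d^n-o(d^n))-2\,o(d^n)=3d^n-o(d^n)$.

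The only genuine choice is the window for $m$: we need $2(m-1)$ to sit \emph{above} the middle degree $\tfrac12(d-1)n$, so that the first sum captures all but a vanishing fraction of the $d^n$ monomials (this is where the palindromic symmetry feeds into Lemma~\ref{asympt}), while $m-1$ must stay \emph{below} it, so that the subtracted sum is negligible by Lemma~\ref{asympt} directly. These two constraints force $m$ to be asymptotic to $c(d-1)n$ with $c\in(\tfrac14,\tfrac12)$, and any such $c$ works. I expect the only place needing real care to be the identification of $N$ and the formula for $\dim N^j$; for the monomial quotient $A_{d,n}$ this is standard, but it is the load-bearing algebraic input.
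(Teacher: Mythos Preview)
Your proof is correct and follows essentially the same route as the paper: identify $N=(x_1,\dots,x_n)$, read off $\dim N^j$ by counting monomials, plug into Bl\"aser's bound, then use the palindromic symmetry of $\extbinom{n}{b}{d-1}$ together with Lemma~\ref{asympt} (with parameter $d-1$) to handle the asymptotics. Your explicit choice $m\sim c(d-1)n$ with $c\in(\tfrac14,\tfrac12)$ is exactly the window the paper isolates with its three inequalities on $m$, and fixing a concrete $c$ such as $3/8$ is arguably a cleaner way to invoke Lemma~\ref{asympt} with a single fixed $q<\tfrac12$.
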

\begin{proof}
The nilradical $N$ of $A_{d,n}$ is the ideal generated by $x_1,\ldots,x_n$, that is, $N$ is the subspace of $A_{d,n}$ of elements with zero constant term. The $m$th power $N^m$ is the subspace spanned by monomials of degree at least $m$, hence the dimension of $N^m$ equals $d^n - \sum_{b=0}^{m-1} \extbinom{n}{b}{\,d-1}$. Theorem\nobreakspace \ref {thblaser} then gives, for any $m \geq 1$,
\begin{align*}
\rank(A_{d,n}) &\geq d^n - \biggl(d^n - \sum_{b=0}^{2m-2} \extbinom{n}{b}{d-1}\biggr) + 2\biggl(d^n - \sum_{b=0}^{m-1} \extbinom{n}{b}{d-1}\biggr)\\
 &=  2d^n + \sum_{b=0}^{2m-2} \extbinom{n}{b}{d-1} -\, 2 \sum_{b=0}^{m-1} \extbinom{n}{b}{d-1}.
\end{align*}
If $2m-1 \leq n(d-1)$, then
\[
\sum_{b=0}^{2m-2}\extbinom{n}{b}{d-1} = d^n - \sum_{b=2m-1}^{n(d-1)}\extbinom{n}{b}{d-1} \!=\, d^n - \sum_{b=0}^{n(d-1)-(2m-1)}\extbinom{n}{b}{d-1},
\]
so
\[
\rank(A_{d,n}) \geq 3\cdot d^n - \sum_{b=0}^{n(d-1)-(2m-1)}\extbinom{n}{b}{d-1} \!-\, 2\sum_{b=0}^{m-1} \extbinom{n}{b}{d-1}.
\]
One checks that for any $n$ large enough there exists an $m\geq 1$ such that $2m-1\leq n(d-1)$,\, $n(d-1)-(2m-1)< \tfrac12 n(d-1)$ and $m-1< \tfrac12 n(d-1)$. Therefore, with Lemma\nobreakspace \ref {asympt}, we obtain the inequality $\rank(A_{d,n}) \geq 3\cdot d^n - o(d^n)$.
\end{proof}

For computations, the following lemma is useful.

\begin{lemma}\label{balls} For integers $b\geq1$, $n\geq 1$, $d\geq 2$,
\[
\extbinom{n}{b}{d-1} = \sum_{i=0}^{\min(n,\floor{b/d})} (-1)^i \smash{\binom{n}{i}} \smash{\binom{b+n-1-i\cdot d}{n-1}}.
\]
\end{lemma}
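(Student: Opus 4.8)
The plan is to extract a coefficient from a generating function. Viewing a distribution of $b$ balls into $n$ containers with at most $d-1$ balls each as a solution in nonnegative integers of $a_1 + \cdots + a_n = b$ with $0 \le a_i \le d-1$, the quantity $\extbinom{n}{b}{d-1}$ is the coefficient of $x^b$ in
\[
\bigl(1 + x + \cdots + x^{d-1}\bigr)^n \;=\; \Bigl(\frac{1-x^d}{1-x}\Bigr)^{n} \;=\; (1-x^d)^n\,(1-x)^{-n}.
\]

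Next I would expand the two factors separately: the binomial theorem gives $(1-x^d)^n = \sum_{i=0}^{n} (-1)^i \binom{n}{i} x^{id}$, and the negative binomial series gives $(1-x)^{-n} = \sum_{j \ge 0} \binom{j+n-1}{n-1} x^j$. Multiplying these and collecting the coefficient of $x^b$ forces $id + j = b$ with $0 \le i \le n$ and $j \ge 0$; the constraint $j \ge 0$ is equivalent to $i \le \floor{b/d}$. Writing $j = b - id$, the coefficient of $x^b$ is exactly
\[
\sum_{i=0}^{\min(n,\floor{b/d})} (-1)^i \binom{n}{i}\binom{b+n-1-i\cdot d}{n-1},
\]
which is the asserted identity.

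There is essentially no obstacle here; the only point needing care is the range of summation, and it is handled automatically by this derivation: for $\floor{b/d} < i \le n$ one would need a negative $j$, so such terms simply do not arise, and no term with $0 \le i \le \min(n,\floor{b/d})$ is omitted. The same identity can alternatively be obtained by inclusion–exclusion: starting from the $\binom{b+n-1}{n-1}$ distributions with no upper bound, for each $i$-element set of containers declared to hold at least $d$ balls one substitutes $a_\ell \mapsto a_\ell + d$ to count $\binom{b-id+n-1}{n-1}$ distributions of the remaining balls, and the alternating sum over $i$ yields the formula, the terms with $id > b$ vanishing on their own.
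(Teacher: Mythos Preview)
Your proof is correct. The paper proves this lemma by inclusion--exclusion directly: it lets $X$ be all placements of $b$ balls into $n$ containers and $A_j$ the placements where container $j$ receives at least $d$ balls, and then counts $X\setminus\bigcup_j A_j$ via the alternating sum, using $\bigl|\bigcap_{j\in I}A_j\bigr|=\binom{b+n-1-|I|d}{n-1}$ and grouping by $|I|=i$. Your primary route is different: you read off the coefficient of $x^b$ in $(1+x+\cdots+x^{d-1})^n=(1-x^d)^n(1-x)^{-n}$ and expand both factors. This is entirely valid and arguably cleaner, since the summation range $0\le i\le\min(n,\lfloor b/d\rfloor)$ falls out automatically from the supports of the two series, whereas in the inclusion--exclusion argument one has to observe separately that the terms with $id>b$ vanish. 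Your closing remark sketching the inclusion--exclusion alternative is essentially the paper's own argument, so you have in fact covered both approaches.
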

\begin{proof}
Let $X \coloneqq \{$ways to put $b$ balls into $n$ containers$\}$ and for $j\in[n]$ let $A_j \coloneqq \{$ways to put $b$ balls in $n$ containers such that container $j$ has at least $d$ balls$\}\subset X$. By the inclusion-exclusion principle \cite[Proposition 1.13]{jukna2011extremal}, the number of elements of $X$ which lie in none of the subsets~$A_j$ is
\[
\sum_{I\subseteq \{1,\ldots,n\}} (-1)^{|I|}\,\Bigl|\bigcap_{j\in I} \!A_j\Bigr|\, = \sum_{I\subseteq \{1,\ldots,n\}} (-1)^{|I|}\,\binom{b+n-1-|I|\cdot d}{n-1}.
\]
Now use that there are $\binom{n}{|I|}$ subsets of size $|I|$ in $\{1,\ldots,n\}$.
The statement about the special case~$d=2$ follows immediately from the definition.
\end{proof}

In the table below we list some values of the lower bound of Proposition\nobreakspace \ref {maincor}.

\begin{center}
\begin{tabular}{p{1.6em}rrrrr}
\toprule
$d$ & 2 & 3 & 4 & 5 & 6\\
\midrule\\[-0.9em]
$n$ &&&&&\\[0.3em]
1 &\bf3 & \bf5 & \bf7 & \bf9 & \bf11 \\
2 &\bf7 & 18 & 33 & 53 & 78 \\
3 &15 & 57 & 142 & 285 & 501 \\
4 &33 & 182 & 601 & 1509 & 3166 \\
5 &68 & 576 & 2507 & 7824 & 19782 \\
6 &141 & 1773 & 10356 & 40329 & 121971 \\
\bottomrule
\end{tabular}
\captionof{table}{Lower bounds for $\rank(A_{d,n})$ from Proposition\nobreakspace \ref {maincor}. The bold numbers are known to be sharp, see Theorem\nobreakspace \ref {charact}.}
\end{center}

\subsection{Rank upper bound}
It is well-known that upper bounds on border rank imply upper bounds on rank. Proposition\nobreakspace \ref {borderrank} implies the following upper bound on $\rank(A_{d,n})$. We will not use the upper bound later, but it provides some context for the lower bound of Proposition\nobreakspace \ref {maincor}.

\begin{proposition}
$\rank(A_{d,n}) \leq (nd+1)d^n$.
\end{proposition}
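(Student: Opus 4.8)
The plan is to upgrade the border rank bound $\borderrank(A_{d,n})=d^n$ of Proposition~\ref{borderrank} to a rank bound by the standard ``de-limiting by interpolation'' argument. The tool I would isolate first is the following form of the Bini--Strassen device (cf.\ Proposition~15.26 in \cite{burgisser1997algebraic}): if $u_i(\varepsilon),v_i(\varepsilon),w_i(\varepsilon)$, $i=1,\dots,r$, are vectors with coordinates Laurent-polynomial in $\varepsilon$, and if the sum $S(\varepsilon)\coloneqq\sum_{i=1}^r u_i(\varepsilon)\otimes v_i(\varepsilon)\otimes w_i(\varepsilon)$ happens to be an honest polynomial in $\varepsilon$ of degree at most $q$ with $S(0)=t$, then $\rank(t)\le(q+1)r$. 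The proof is one line: pick $q+1$ distinct nonzero scalars $\varepsilon_0,\dots,\varepsilon_q$ and Lagrange coefficients $c_k$ so that $t=S(0)=\sum_k c_k S(\varepsilon_k)$, and note that each $S(\varepsilon_k)$ is a sum of $r$ simple tensors (at a nonzero $\varepsilon_k$ the Laurent polynomials have no poles).

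Next I would write down the explicit border rank decomposition of $A_{d,n}$ with $d^n$ summands to which this applies. The natural one is the $n$-th tensor power of the classical confluent decomposition witnessing $\borderrank(\CC[x]/(x^d))=d$ (see \cite[Example 15.20]{burgisser1997algebraic}): fix distinct scalars $\lambda_1,\dots,\lambda_d$, let $u_j(\varepsilon)$ be ``evaluation at $\varepsilon\lambda_j$'' written in the monomial basis $1,x,\dots,x^{d-1}$, and let $w_j(\varepsilon)$ be the coefficient vector of the $j$th Lagrange interpolant for the nodes $\varepsilon\lambda_1,\dots,\varepsilon\lambda_d$ read modulo $x^d$. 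Then ``evaluate, multiply pointwise, interpolate, truncate'' shows that $\sum_{j=1}^d u_j(\varepsilon)\otimes u_j(\varepsilon)\otimes w_j(\varepsilon)$ is a polynomial in $\varepsilon$ (the poles of the individual $w_j$ cancel in the sum) whose value at $\varepsilon=0$ is the structure tensor of $\CC[x]/(x^d)$. Its $n$-fold tensor product -- equivalently, the confluent scheme on the node grid $\{(\varepsilon\lambda_{j_1},\dots,\varepsilon\lambda_{j_n})\}$ -- is a polynomial decomposition of $A_{d,n}=(\CC[x]/(x^d))^{\otimes n}$ with $d^n$ simple summands, and in particular re-proves $\borderrank(A_{d,n})\le d^n$.

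Finally I would bound the $\varepsilon$-degree $q$ of this polynomial and invoke the lemma. Here each $u_j$ has $\varepsilon$-degree $d-1$ and each $w_j$ a pole of order $d-1$; after accounting for the pole cancellations, the monomials in $\varepsilon$ that actually survive in the $n$-fold product have degree at most $nd$ -- and to obtain the stated constant one chooses the $\lambda_j$'s advantageously (for instance letting one node be $0$, which kills the $x^0$-coefficient terms, and then using elementary symmetric-function identities of the type $\sum_p(-1)^p e_p h_{m-p}=0$ to annihilate the remaining top-degree contributions). Substituting $r=d^n$ and $q=nd$ into the interpolation lemma then gives $\rank(A_{d,n})\le(nd+1)d^n$.

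The main obstacle is precisely this last degree bookkeeping: determining, after the cancellations in the tensor power of the confluent decomposition, exactly which powers of $\varepsilon$ remain, so as to pin down $q$. Everything else -- the interpolation lemma, the verification that the confluent scheme computes the truncated product, and taking the tensor power -- is formal. (As the remark in the text notes, this upper bound is not used elsewhere, so only the order of magnitude $\operatorname{poly}(n,d)\cdot d^n$ really matters for context.)
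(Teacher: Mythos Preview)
Your approach is essentially the same as the paper's: both convert the border-rank-$d^n$ approximate decomposition of $A_{d,n}$ (the $n$-fold tensor power of the confluent scheme for $\CC[x]/(x^d)$) into a rank bound via the Bini--Strassen interpolation device. The paper simply cites \cite[Theorem~5]{vrana2013asymptotic} --- which packages exactly the tensor-power degree bookkeeping you flag as the ``main obstacle'' --- together with the error degree $d$ from \cite[Example~15.20]{burgisser1997algebraic}, rather than carrying out that computation by hand.
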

\begin{proof}
The statement follows from the proof of Theorem 5 in \cite{vrana2013asymptotic}, using that the error degree in the degeneration of the $d$th unit tensor to $A_{d,n}$ is $d$ \cite[Example 15.20]{burgisser1997algebraic}.
\end{proof}

\section{Generalised W-state tensor}\label{secW}

In quantum information theory, the generalised W-state tensor $W_k$ is the tensor in $(\CC^2)^{\otimes k}$ defined by
\[
W_k
\coloneqq e_1\otimes e_0 \otimes \cdots \otimes e_0\, +\, e_0 \otimes e_1\otimes \cdots \otimes e_0\, +\, \cdots\, +\, e_0 \otimes \cdots \otimes e_0 \otimes e_1.
\]
It is not hard to check that, in a particular basis, the tensor of the algebra $A_{2,1} = \CC[x]/(x^2)$ equals~$W_3$. Therefore, $\rank(A_{2,n}) = \rank(W_3^{\otimes n})$ and $\borderrank(A_{2,n}) = \borderrank(W_3^{\otimes n})$. By the following proposition, lower bounds for $\rank(W_3^{\otimes n})$ give lower bounds for~$\rank(W_k^{\otimes n})$.

\begin{proposition}[\cite{chen2010tensor}]\label{induction} $\rank(W_k^{\otimes n}) \geq \rank(W_3^{\otimes n}) + (k-3)(2^n-1)$. 
\end{proposition}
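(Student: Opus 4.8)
The plan is to induct on $k$, with the base case $k=3$ being trivial. So fix $k\geq 4$ and assume the bound $\rank(W_{k-1}^{\otimes n}) \geq \rank(W_3^{\otimes n}) + (k-4)(2^n-1)$; it then suffices to prove the single step $\rank(W_k^{\otimes n}) \geq \rank(W_{k-1}^{\otimes n}) + (2^n-1)$. First I would set up coordinates: writing $W_k$ as a $k$-party tensor over $\CC^2 \otimes \cdots \otimes \CC^2$, the idea is to view $W_k^{\otimes n}$ as living in $(\CC^{2^n})^{\otimes k}$ and to ``collapse'' the last party. Concretely, think of the $k$-th factor $\CC^{2^n}$ as spanned by the $2^n$ monomials; the all-zero monomial $e_0^{\otimes n}$ spans a distinguished line, and quotienting/projecting the $k$-th slot onto either this line or its complement will relate $W_k^{\otimes n}$ to $W_{k-1}^{\otimes n}$.

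The key step is a \emph{substitution} (or Alder--Strassen-style) argument. Given an optimal rank decomposition $W_k^{\otimes n} = \sum_{i=1}^{r} a_i^{(1)} \otimes \cdots \otimes a_i^{(k)}$ with $r = \rank(W_k^{\otimes n})$, I would examine the components $a_i^{(k)} \in \CC^{2^n}$ in the last slot. Split the index set into those $i$ for which $a_i^{(k)}$ lies in the hyperplane $H$ orthogonal to $e_1^{\otimes n}$ (the ``highest-degree'' coordinate functional) and those for which it does not. Applying the functional that reads off the coefficient of $e_1^{\otimes n}$ in the $k$-th slot kills the $k$-th tensor leg and projects $W_k^{\otimes n}$ onto a copy of $W_{k-1}^{\otimes n}$ (since, in the algebra picture, setting the top monomial's dual against $W_k = A_{2,1}$-type structure tensor leaves exactly the $W_{k-1}$ contribution on the remaining factors — this is where the combinatorial identity $W_3 = A_{2,1}$ and the Kronecker-product structure is used). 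This shows the ``non-$H$'' terms already give a decomposition of $W_{k-1}^{\otimes n}$, so there are at least $\rank(W_{k-1}^{\otimes n})$ of them. Separately, one argues that at least $2^n - 1$ terms must have $a_i^{(k)} \notin$ that line's complement, i.e. genuinely use the $e_1^{\otimes n}$ direction, by a conciseness / lower-bound-on-the-slice argument: the slice of $W_k^{\otimes n}$ in the $e_1^{\otimes n}$ direction of the last factor is itself a concise tensor in $(\CC^{2^n})^{\otimes (k-1)}$ of the ``identity-like'' shape, forcing rank $\geq 2^n$, and after removing one overlap term this yields the extra $2^n-1$.

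The main obstacle I expect is making the ``separation'' clean: showing that the two groups of rank-one terms — those witnessing $\rank(W_{k-1}^{\otimes n})$ after projection, and those witnessing the extra $2^n-1$ — can be taken disjoint, or overlap in a controlled way that still leaves the stated additive slack. The safe route is the standard trick of choosing a generic functional on the last factor so that the ``substituted'' decomposition loses only the terms that are genuinely supported on the chosen line, then carefully accounting: $r \geq (\text{terms not on the line}) + (\text{terms on the line})$, bounding the first quantity below by $\rank(W_{k-1}^{\otimes n})$ via the projection and the second by $2^n - 1$ via conciseness of the corresponding slice, and combining. I would also double-check the boundary bookkeeping (the ``$-1$'') against the explicit small cases $n=1$ (where $\rank(W_k) = k$ is classical) to make sure the constant in $(k-3)(2^n-1)$ is exactly right rather than off by one.
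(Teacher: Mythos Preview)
The paper does not give its own proof of this proposition; it is quoted from Chen et al.~\cite{chen2010tensor}. Your overall plan --- induct on $k$ and use substitution in the $k$-th factor --- is the right one and matches theirs, but the execution has a slip and a real gap. The slip: since $(e_0^*)_{(k)}W_k = W_{k-1}$ while $(e_1^*)_{(k)}W_k = e_0^{\otimes(k-1)}$, it is the $(e_0^*)^{\otimes n}$-slice of $W_k^{\otimes n}$ in the last factor that equals $W_{k-1}^{\otimes n}$; the $(e_1^*)^{\otimes n}$-slice you name is the rank-one tensor $e_0^{\otimes n(k-1)}$.

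The gap is the ``on the line / off the line'' count. No $a_i^{(k)}$ need lie exactly on any prescribed line, so you cannot lower-bound that count by $2^n-1$; and conciseness only says the $a_i^{(k)}$ span $\CC^{2^n}$, which is far weaker. What actually works is: choose (Steinitz exchange) $2^n-1$ of the $a_i^{(k)}$ which together with $e_0^{\otimes n}$ form a basis of $\CC^{2^n}$, and let $\psi$ be the functional dual to $e_0^{\otimes n}$ in that basis. Then $\psi$ annihilates those $2^n-1$ terms, giving $r \geq (2^n-1)+\rank(t_\psi)$ with
\[
t_\psi \;=\; W_{k-1}^{\otimes n} \;+\; \sum_{S\neq\emptyset} c_S\, T_S,
\]
where $T_S$ is the $e_S^*$-slice (a padded copy of $W_{k-1}^{\otimes(n-|S|)}$) and the $c_S$ are unknown. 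The entire inductive step now hinges on proving $\rank(t_\psi)\geq\rank(W_{k-1}^{\otimes n})$ for \emph{all} choices of the $c_S$ --- exactly the point your proposal leaves open. This is where the real work lies: one shows that the lower-order perturbations $T_S$ can be absorbed by invertible local maps on the first $k-1$ factors (built from unipotent maps $e_0\mapsto e_0$, $e_1\mapsto e_1+c\,e_0$ on the constituent $\CC^2$'s, and their analogues on $(\CC^2)^{\otimes n}$), so that $t_\psi$ lies in the $GL$-orbit of $W_{k-1}^{\otimes n}$ and hence has the same rank. You correctly flagged the separation as the obstacle, but the resolution you sketch (``generic functional'', ``conciseness of the slice'') does not supply this mechanism.
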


\begin{theorem}\label{wlowerbound} 
\[
\rank(W_k^{\otimes n}) \geq (k-1)2^n + \,\,\max_{m\geq 1}\,\,\,\sum_{b=0}^{2m-2}\binom{n}{b} - 2\sum_{b=0}^{m-1} \binom{n}{b} - (k-3) \geq k\cdot 2^n - o(2^n).
\]
\end{theorem}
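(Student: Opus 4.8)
The plan is to combine the two ingredients already assembled in the paper: the exact rank lower bound for $A_{2,n}$ coming from Proposition~\ref{maincor} (specialised to $d=2$), and the inductive bound of Proposition~\ref{induction} that transfers rank lower bounds from $W_3^{\otimes n}$ to $W_k^{\otimes n}$. First I would record that, in a suitable basis, the structure tensor of $A_{2,1}=\CC[x]/(x^2)$ is exactly $W_3$, so that $\rank(W_3^{\otimes n})=\rank(A_{2,n})$ by compatibility of tensor rank with Kronecker products. Then, setting $d=2$ in Proposition~\ref{maincor} and using that $\extbinom{n}{b}{1}=\binom{n}{b}$ (at most one ball per container), I obtain
\[
\rank(W_3^{\otimes n}) \;\geq\; 2\cdot 2^n + \max_{m\geq 1}\Biggl[\sum_{b=0}^{2m-2}\binom{n}{b} - 2\sum_{b=0}^{m-1}\binom{n}{b}\Biggr].
\]

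Next I would feed this into Proposition~\ref{induction}, which gives $\rank(W_k^{\otimes n})\geq \rank(W_3^{\otimes n})+(k-3)(2^n-1)$. Adding $(k-3)(2^n-1)=(k-3)2^n-(k-3)$ to the bound above yields
\[
\rank(W_k^{\otimes n}) \;\geq\; (k-1)2^n + \max_{m\geq 1}\Biggl[\sum_{b=0}^{2m-2}\binom{n}{b} - 2\sum_{b=0}^{m-1}\binom{n}{b}\Biggr] - (k-3),
\]
which is precisely the first inequality claimed. For the asymptotic statement I would argue exactly as in the second half of the proof of Proposition~\ref{maincor}: when $2m-1\leq n$ one rewrites $\sum_{b=0}^{2m-2}\binom{n}{b}=2^n-\sum_{b=0}^{n-(2m-1)}\binom{n}{b}$, so the bracketed quantity becomes $2^n$ minus two tail sums of binomial coefficients, and hence the whole lower bound is $k\cdot 2^n - \sum_{b=0}^{n-(2m-1)}\binom{n}{b} - 2\sum_{b=0}^{m-1}\binom{n}{b} - (k-3)$. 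Choosing $m$ so that both $n-(2m-1)$ and $m-1$ are strictly below $n/2$ — which is possible for all large $n$, e.g.\ $m\approx n/3$ — Lemma~\ref{asympt} (the $d=1$ case, equivalently the binary-entropy tail bound) shows both tail sums are $o(2^n)$, giving $\rank(W_k^{\otimes n})\geq k\cdot 2^n - o(2^n)$.

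There is no serious obstacle here; the theorem is essentially a bookkeeping corollary of earlier results, and the only points that need care are (i) verifying the basis change identifying $A_{2,1}$ with $W_3$ and the resulting equality of ranks of Kronecker powers, and (ii) checking the range constraints on $m$ so that Lemma~\ref{asympt} applies to both tail sums simultaneously — the mild asymmetry between the two sums ($n-(2m-1)$ versus $m-1$) means a single choice such as $m=\lceil n/3\rceil$ must be checked to push both exponents below $n/2$, but this is routine. The constant $-(k-3)$ is harmless in the asymptotic regime and is simply carried along.
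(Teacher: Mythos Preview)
Your proposal is correct and follows exactly the paper's approach: the paper's proof is the one-line ``Combine Proposition~\ref{induction} with Proposition~\ref{maincor} for $A_{2,n}$,'' and you have simply unpacked this, correctly specialising $\extbinom{n}{b}{1}=\binom{n}{b}$ and carrying the $(k-3)(2^n-1)$ additive term through. Your asymptotic argument reproduces the second half of the proof of Proposition~\ref{maincor} (which already contains the $3\cdot 2^n - o(2^n)$ conclusion for $d=2$), so you could equally well just cite that conclusion directly rather than redo the choice of $m$.
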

\begin{proof}
Combine Proposition\nobreakspace \ref {induction} with Proposition\nobreakspace \ref {maincor} for $A_{2,n}$.
\end{proof}
Chen et al.\ give the lower bound $\rank(W_k^{\otimes n}) \geq (k-1)2^n - k + 2$, which they obtain by combining the lower bound $\rank(A_{2,n}) \geq 2^{n+1} - 1$ with Proposition\nobreakspace \ref {induction}~\cite{chen2010tensor}. Theorem\nobreakspace \ref {wlowerbound} improves the lower bound of Chen et al.
The best upper bound so far is $\rank(W_k^{\otimes n}) \leq (n(k-1) +1)2^n$ \cite{vrana2013asymptotic}.

Below we list some values of the lower bound of Theorem\nobreakspace \ref {wlowerbound}. The first two columns are, in fact, sharp \cite{chen2010tensor}. In Section\nobreakspace \ref {seccube} we will prove the equality $\rank(W_3^{\otimes 3}) = 16$. 
Therefore, the lower bound of Theorem\nobreakspace \ref {wlowerbound} is not sharp in general.

\begin{center}
\begin{tabular}{p{1.7em}rrrrrrrrrrr}
\toprule
$n$ & 1 & 2 & 3 & 4 & 5 & 6 & 7 & 8 & 9 & 10\\
\midrule\\[-0.6em]
$k$ &&&&&&&&&\\[0.4em]
3   & \bf3 & \bf7 & 15 & 33 & 68 & 141 & 297 & 601 & 1230 & 2544 \\
4   & \bf4 & \bf10 & 22 & 48 & 99 & 204 & 424 & 856 & 1741 & 3567 \\
5   & \bf5 & \bf13 & 29 & 63 & 130 & 267 & 551 & 1111 & 2252 & 4590 \\
6   & \bf6 & \bf16 & 36 & 78 & 161 & 330 & 678 & 1366 & 2763 & 5613 \\
7   & \bf7 & \bf19 & 43 & 93 & 192 & 393 & 805 & 1621 & 3274 & 6636 \\
8   & \bf8 & \bf22 & 50 & 108 & 223 & 456 & 932 & 1876 & 3785 & 7659 \\
9   & \bf9 & \bf25 & 57 & 123 & 254 & 519 & 1059 & 2131 & 4296 & 8682 \\
10  & \bf10 & \bf28 & 64 & 138 & 285 & 582 & 1186 & 2386 & 4807 & 9705 \\
\bottomrule
\end{tabular}
\captionof{table}{Lower bounds for $\rank(W_k^{\otimes n})$ from Theorem\nobreakspace \ref {wlowerbound}. The bold numbers are known to be sharp~\cite{chen2010tensor}.}
\end{center}

\section{Gap between rank and border rank}

Our main result Theorem\nobreakspace \ref {mainthintro} follows easily from Theorem\nobreakspace \ref {wlowerbound}.

\begin{proof}[\bf\upshape Proof of Theorem\nobreakspace \ref {mainthintro}] 
By Proposition\nobreakspace \ref {borderrank}, $\borderrank(W_k^{\otimes n}) = 2^n$.
By Theorem\nobreakspace \ref {wlowerbound}, therefore,
\[
\frac{\rank(W_k^{\otimes n})}{\borderrank(W_k^{\otimes n})} \geq k - \frac{o(2^n)}{2^n},
\]
when $n$ goes to infinity.
\end{proof}

\section{Tensor cube of the W-state tensor}\label{seccube}

It is known that the tensor rank of $W \coloneqq W_3$ equals 3 and the tensor rank of the tensor square $W^{\otimes 2}$ equals 7, see~\cite[Lemma~3]{yu2010tensor}. For the tensor cube~$W^{\otimes 3}$ the tensor rank was known to be either~15 or 16 \cite[Theorem~4]{chen2010tensor}. We will prove the following.

\begin{theorem}\label{thmcube}
The tensor rank of $W^{\otimes 3}$ equals $16$.
\end{theorem}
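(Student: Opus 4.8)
The plan is to establish the matching lower bound $\rank(W^{\otimes 3}) \geq 16$, since Chen et al.\ already show $\rank(W^{\otimes 3}) \leq 16$ and that the rank is either $15$ or $16$. So the whole game is to rule out rank $15$. Equivalently, via the identification $W^{\otimes 3} \cong A_{2,3}$ as structure tensors (Section\nobreakspace \ref{secW}), we must show $\rank(A_{2,3}) \geq 16$; note that Proposition\nobreakspace \ref{maincor} only gives $\rank(A_{2,3}) \geq 15$, so we need to squeeze out one more. I would first set up the substitution/restriction method for tensor rank lower bounds: if $t \in U \otimes V \otimes W$ has rank $r$, then for any hyperplane $H \subseteq U^*$ (equivalently, for a generic linear combination of the ``first-factor slices''), the corresponding flattening / quotient tensor has rank at most $r-1$, because in an optimal decomposition $t = \sum_{i=1}^r a_i \otimes b_i \otimes c_i$ at least one $a_i$ is not killed by $H$. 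This is the standard tool (used e.g.\ in Bläser's and Chen et al.'s arguments), and it reduces a rank-$15$ hypothesis for the $8\times 8\times 8$ tensor $W^{\otimes 3}$ to a rank-$\leq 14$ statement about a derived tensor of smaller format.

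The key steps, in order, would be: (1) Assume for contradiction $\rank(W^{\otimes 3}) = 15$ and fix an optimal decomposition. (2) Exploit the algebra structure: $A_{2,3}$ is a local algebra with maximal ideal $N = (x_1,x_2,x_3)$, $\dim N = 7$, $\dim N^2 = 4$ (spanned by $x_1x_2, x_1x_3, x_2x_3, x_1x_2x_3$), $\dim N^3 = 1$ (spanned by $x_1x_2x_3$), and $N^4 = 0$. Bläser's Theorem\nobreakspace \ref{thblaser} with $m=1$ gives exactly $15$; to improve, I would revisit the proof of that theorem in this very concrete case and track the structure of a putative optimal decomposition, or alternatively apply the substitution method to the ``multiplication by the socle'' direction. (3) The cleanest route is probably: restrict the third tensor factor to the hyperplane $e_{x_1x_2x_3}^* = 0$ in the dual (i.e.\ project away the one-dimensional socle $N^3$), obtaining a tensor $t'$ with $\rank(t') \leq 14$; then argue that $t'$ is still ``rich enough'' — it degenerates to, or contains as a restriction, a tensor whose rank we can lower-bound by $15$ or which contradicts known exact values (the rank of $A_{2,3}$ modulo its socle, or a copy of $W^{\otimes 2} \cong A_{2,2}$ of rank $7$ plus something). (4) Conclude the contradiction.

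Actually, the sharper and more likely successful approach is a refined version of Bläser's argument specialized to $A_{2,n}$: one shows that in any rank-$r$ decomposition of a unital algebra tensor, the points $a_i$ can be normalized so that the analysis of how many of them can lie in $N$, in $N + \CC\cdot 1$, etc., is tighter than the generic bound — in the spirit of Bläser's later refinements. For $A_{2,3}$ specifically, I would try to show directly that a rank-$15$ decomposition forces an impossible configuration among the $15$ triples with respect to the flag $0 \subset N^3 \subset N^2 \subset N \subset A_{2,3}$ of dimensions $0 < 1 < 4 < 7 < 8$. A purely computational fallback also exists: rank $\leq 15$ for a fixed $8\times 8\times 8$ tensor is a system of polynomial equations, and one can in principle certify infeasibility, but I would only resort to this if the structural argument stalls.

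The hard part will be extracting that single extra unit — going from $15$ to $16$ — since the asymptotically-tight bound of Proposition\nobreakspace \ref{maincor} is by design not sharp for small $n$, and the $W^{\otimes 3}$ case sits right at the boundary where the known general lower bound techniques run out. The main obstacle is thus proving that no rank-$15$ decomposition exists, which requires either a careful combinatorial case analysis of the decomposition relative to the radical filtration (with the risk of many subcases), or a clean new invariant; I expect the bulk of the real work, and the only genuinely nontrivial part of the proof, to be concentrated there.
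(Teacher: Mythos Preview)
Your proposal identifies the right target (rule out rank $15$ for $A_{2,3}$) but misses the key structural tool that makes this tractable. The paper does not use substitution/restriction or a case analysis of decompositions relative to the radical filtration. Instead it observes that rank $15$ would mean $A_{2,3}$ is an algebra of \emph{minimal rank} in the Alder--Strassen sense ($\rank(A) = 2\dim A - t$ with $t=1$ maximal two-sided ideal), and invokes the known classification of commutative local algebras of minimal rank (Theorem~\ref{charact}): such an algebra is either simply generated or a \emph{generalised null algebra}. Nakayama's lemma gives $\dim N/N^2 = 3$, so $A_{2,3}$ cannot be simply generated; hence one only has to show $A_{2,3}$ is not a generalised null algebra, i.e.\ there are no nilpotents $w_1,w_2,w_3$ with $w_iw_j=0$ for $i\neq j$ that generate $A$. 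Writing $w_i = \sum_j A_{ij} x_j \pmod{N^2}$ with $\det(A_{ij})\neq 0$, the vanishing of $w_1w_2$ and $w_1w_3$ modulo $N^3$ yields six bilinear equations in the $A_{ij}$, and a short syzygy computation shows these force $\det(A_{ij})=0$, a contradiction.

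Your substitution idea (project away the socle, then lower-bound the resulting tensor) is a reasonable instinct, but the proposal never says which tensor you would bound or how; the phrase ``argue that $t'$ is still rich enough'' is exactly where the proof would have to happen, and nothing you wrote indicates a mechanism to get $\rank(t') \geq 15$. The filtration-counting route is similarly open-ended: Bl\"aser's Theorem~\ref{thblaser} with $m=1$ already extracts what the flag $0\subset N^3\subset N^2\subset N\subset A$ yields generically, so squeezing out one more unit without the classification theorem would essentially mean reproving part of that classification by hand --- the ``many subcases'' you anticipate. The genuinely missing idea is that ``rank $15$'' is not merely a numerical hypothesis but a structural one (minimal rank), for which a complete and directly usable characterization already exists in the literature.
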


In the following, \emph{algebra} means complex finite-dimensional associative algebra. Let $(V, \phi)$ be an algebra. A subspace $I\subset V$ is called a two-sided ideal if $\phi(I,V) = \phi(V,I) = I$. A two-sided ideal $I$ is called maximal if for all two-sided ideals $J$ with $I \subseteq J \subseteq V$ we have $J = I$ or $J = V$. Similarly for left-ideals. 

\begin{theorem}[Alder-Strassen bound {\cite[Theorem 17.14]{burgisser1997algebraic}}]\label{AlderStrassen}
Let $A$ be an algebra with $t$ maximal two-sided ideals. Then $\rank(A) \geq 2\dim A - t$.
\end{theorem}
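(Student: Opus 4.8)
The plan is to follow the argument of Alder and Strassen. Set $r := \rank(A)$ and fix a rank decomposition of the structure tensor, i.e. linear forms $u_1,\dots,u_r,v_1,\dots,v_r\in A^*$ and elements $w_1,\dots,w_r\in A$ with
\[
a\cdot b \;=\; \sum_{\ell=1}^{r} u_\ell(a)\,v_\ell(b)\,w_\ell \qquad\text{for all } a,b\in A.
\]
The first step is \emph{conciseness}. Since $A$ has a unit $1$, plugging in $b=1$ shows that no nonzero $a_0$ can satisfy $u_\ell(a_0)=0$ for all $\ell$ (otherwise $a_0 = a_0\cdot 1 = 0$); hence $\{u_\ell\}_\ell$ spans $A^*$. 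Symmetrically $\{v_\ell\}_\ell$ spans $A^*$ (plug in $a=1$) and $\{w_\ell\}_\ell$ spans $A$ (apply an arbitrary functional and plug in $a=1$). This already gives $r\ge\dim A$; the content of the theorem is to locate the remaining $\dim A - t$ essential products.

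The second step is a reduction to the semisimple case. I would invoke a substitution-method lemma of Alder--Strassen type — for a two-sided ideal $I$ with $I^2=0$ one has $\rank(A)\ge\rank(A/I)+2\dim I$ — and iterate it along the chain $\rad A\supseteq(\rad A)^2\supseteq\cdots$ to obtain $\rank(A)\ge\rank(A/\rad A)+2\dim(\rad A)$. Because the maximal two-sided ideals of $A$ all contain $\rad A$ and hence correspond bijectively to those of $A/\rad A$, the parameter $t$ is unchanged, and $\dim(\rad A)=\dim A-\dim(A/\rad A)$, so it suffices to prove $\rank(B)\ge2\dim B-t$ for a semisimple algebra $B$ with $t$ maximal two-sided ideals. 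Morally, the ideal lemma holds because an element $x$ in the radical with $x^2=0$ satisfies $1\cdot x=x\cdot 1=x$ but cannot be produced ``for free'' by a computation that already accounts for $A/I$, which forces two extra rank-one terms per dimension of $I$.

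For the semisimple case, Wedderburn's theorem over $\CC$ gives $B\cong\prod_{i=1}^{t}M_{n_i}(\CC)$, with $\dim B=\sum_i n_i^2$ and with the $t$ matrix factors corresponding to the $t$ maximal two-sided ideals. I would run the substitution method on $B$ directly: starting from a concise decomposition with $r=\dim B+m$ terms, repeatedly select a redundant rank-one term and substitute a value into one of the two arguments — chosen using the rich left-ideal structure of the factors $M_{n_i}(\CC)$, e.g. right multiplication by a primitive idempotent — so as to eliminate that term while passing to a structurally smaller multiplication problem, and show this can be carried out $\dim B-t$ times. The block-diagonal structure of $B$ is precisely what obstructs the last $t$ would-be eliminations, one per simple component, yielding $m\ge\dim B-t$, i.e. $\rank(B)\ge2\dim B-t$.

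The main obstacle is the combinatorial heart of the substitution method in both places it is invoked — the $I^2=0$ reduction and the semisimple step — where one must verify that peeling off a single structural dimension genuinely forces two (respectively one) additional rank-one terms and cannot be absorbed by cancellation. This requires the standard but delicate normalisation of bilinear algorithms, passing to bases in which the relevant linear forms are triangular and ruling out degenerate cancellations, and it is the one place where ground-field hypotheses enter (harmless here, since tensor rank over $\CC$ is invariant under field extension and $\CC$ is infinite). Everything else — the identification of $t$ with the number of Wedderburn components and the final arithmetic $\rank(A)\ge2\dim B-t+2\dim(\rad A)=2\dim A-t$ — is bookkeeping.
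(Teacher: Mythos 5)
The paper does not prove this statement: it is the Alder--Strassen theorem, quoted verbatim from B\"urgisser, Clausen and Shokrollahi (\emph{Algebraic Complexity Theory}, Theorem~17.14), and used in this paper as a black box. There is therefore no proof here for your proposal to be compared against; the right move in the context of this paper is simply to cite the result, as the author does.

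On the merits of your sketch: the conciseness step is correct and standard. Be aware, though, that the two places you defer to as ``the main obstacle'' --- the lemma $\rank(A)\ge\rank(A/I)+2\dim I$ for a two-sided ideal with $I^2=0$, and the claimed $\dim B - t$ eliminations in the semisimple case --- are not verifications sitting on top of a finished argument; they \emph{are} the theorem, and everything else in the outline (conciseness, Wedderburn, the chain $\rad A\supseteq(\rad A)^2\supseteq\cdots$, the final arithmetic) really is just bookkeeping. The concrete gap you would have to close in the ideal lemma is this: after substituting to kill $\dim I$ terms via the first argument and $\dim I$ via the second (using $I^2=0$ to annihilate the cross term), the surviving rank-one terms sum, modulo $I$, to a bilinear map $A\times A\to A/I$ that agrees with $(a,b)\mapsto \overline{ab}$; but the surviving linear forms $u_\ell,v_\ell$ need not factor through $A/I$, so this is not yet a rank decomposition of the structure tensor of $A/I$. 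One must argue that this tensor lies in the image of $(A/I)^*\otimes(A/I)^*\otimes(A/I)$ inside $A^*\otimes A^*\otimes(A/I)$ and that tensor rank is unchanged under the injections $(A/I)^*\hookrightarrow A^*$, and separately ensure that the $\dim I$ indices chosen for the $u$-substitutions and the $\dim I$ chosen for the $v$-substitutions are genuinely disjoint, which requires control over the restrictions $v_\ell|_I$ \emph{after} the $u$-round. This is exactly where the BCS Chapter~17 machinery (the separation/extension lemmas) spends its effort, and the semisimple step has analogous subtleties. So: plausible outline, but the ``obstacle'' paragraph is carrying essentially the whole proof.
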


\begin{definition}
Let $A$ be an algebra with $t$ maximal two-sided ideals. We say $A$ has \emph{minimal rank} if $\rank(A) = 2\dim A - t$.
\end{definition}

There is a structural description of the algebras of minimal rank \cite{blaser2005complete}. We will only need the following special case. A \emph{simply generated algebra} is an algebra of the form $\CC[x]/(f)$ for some nonconstant polynomial $f\in \CC[x]$. A \emph{generalised null algebra} is an algebra $A$ such that there exist nilpotent elements $w_1,\ldots,w_s\in A$ with $w_iw_j=0$ if $i\neq j$ and $A = \CC[w_1,\ldots,w_s]$. A \emph{local} algebra is an algebra with a unique maximal left-ideal. The \emph{radical} $\rad A$ of $A$ is the intersection of all maximal left-ideals of $A$. The radical is a two-sided nilpotent ideal (see for example \cite{MR674652}).

\begin{theorem}[{\cite[Theorem~17.38]{burgisser1997algebraic}}]\label{charact}
A commutative local algebra is of minimal rank if and only if it is a simply generated algebra or a generalised null algebra.
\end{theorem}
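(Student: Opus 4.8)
The plan is to prove the two implications separately. Write $\mathfrak{m}\coloneqq\rad A$ and $m\coloneqq\dim\mathfrak{m}$, so $\dim A=m+1$; since a local algebra has exactly one maximal two-sided ideal, the Alder-Strassen bound (Theorem~\ref{AlderStrassen}) reads $\rank(A)\geq 2m+1$. Hence ``$A$ has minimal rank'' means ``$\rank(A)\leq 2\dim A-1$'', and for the ``if'' direction it is enough to exhibit, for each simply generated local algebra and each generalised null algebra, a bilinear algorithm of length $2\dim A-1$.

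For the ``if'' direction: a simply generated local algebra is, after translating $x$, of the form $\CC[x]/(x^n)$, whose multiplication is polynomial multiplication $\CC[x]_{<n}\times\CC[x]_{<n}\to\CC[x]_{<2n-1}$ post-composed with truncation modulo $x^n$; polynomial multiplication has rank $2n-1$ over $\CC$ (evaluate both arguments at $2n-1$ points, multiply pointwise, interpolate), so $\rank(\CC[x]/(x^n))\leq 2n-1$. For a generalised null algebra $A=\CC[w_1,\dots,w_s]$ with $w_iw_j=0$ for $i\neq j$ and $w_i$ nilpotent of index $d_i$, one has $\dim A=1+\sum_i(d_i-1)$ and every element is uniquely $\lambda e+\sum_i p_i(w_i)$ with $p_i(0)=0$, $\deg p_i<d_i$; since the cross terms vanish,
\[
\Bigl(\lambda e+\textstyle\sum_i p_i(w_i)\Bigr)\Bigl(\mu e+\textstyle\sum_i q_i(w_i)\Bigr)=\lambda\mu\,e+\sum_i\bigl[(\lambda e+p_i(w_i))(\mu e+q_i(w_i))-\lambda\mu\,e\bigr],
\]
the $i$th inner product taking place in $\CC[w_i]/(w_i^{d_i})\cong\CC[x]/(x^{d_i})$. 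Using for each $i$ an optimal length-$(2d_i-1)$ algorithm for $\CC[x]/(x^{d_i})$ that includes the evaluation-at-$0$ product $\lambda\mu$, and letting all $s$ components reuse that one product, gives a bilinear algorithm for $A$ of length $1+\sum_i(2d_i-2)=2\dim A-1$. In both families Alder-Strassen matches these upper bounds, so the algebras have minimal rank.

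For the ``only if'' direction I would induct on $\dim A$, the cases $\dim A\leq 2$ being immediate. Let $A$ be commutative local with $\rank(A)=2m+1$ and $m\geq2$; choose a minimal nonzero ideal $I\subseteq A$, so $\dim I=1$ and $I\subseteq\operatorname{socle}(A)=\operatorname{ann}(\mathfrak{m})$, and put $B\coloneqq A/I$, again commutative local with $\dim B=m$. The crucial input should be a lemma of the shape $\rank(A)\geq\rank(B)+2$, expressing that an optimal algorithm for $A$ can be arranged to also compute the multiplication of $B$ with two products to spare; with Alder-Strassen this forces $\rank(B)=2\dim B-1$, so $B$ has minimal rank and, by the inductive hypothesis, is simply generated or a generalised null algebra. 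One then lifts: take generators of $\mathfrak{m}$ reducing to the known generators of $\mathfrak{m}/I$, together (if $I$ is not already inside the subalgebra they generate) with a further generator spanning $I$, which, lying in $\operatorname{ann}(\mathfrak{m})$, has square and all cross products $0$; and one checks that $A$ is again simply generated (when the new products land back inside the subalgebra generated by the lifts) or a generalised null algebra, \emph{provided} the lifted generators still pairwise multiply to $0$. Excluding the opposite case, where some pair of lifted generators has a nonzero product inside $I$, is exactly where the equality case of $\rank(A)\geq\rank(B)+2$ must be invoked: such a nonzero product makes the extension ``non-minimal'' and should cost an additional product, contradicting $\rank(A)=2m+1$.

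The main obstacle is therefore the structural lemma relating $\rank(A)$ to $\rank(A/I)$ for a one-dimensional socle ideal $I$, together with the analysis of its equality case; equivalently, this is the rigidity of optimal bilinear algorithms for algebras that already have minimal rank. Everything else (the rank $2n-1$ of polynomial multiplication, the gluing of the $\CC[x]/(x^{d_i})$-algorithms along their shared product, and the lifting step) is routine once that lemma is in hand; it lies within the Alder-Strassen circle of ideas, pushed to its borderline case.
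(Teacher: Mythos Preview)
The paper does not prove this theorem at all: it is quoted verbatim as Theorem~17.38 from B\"urgisser--Clausen--Shokrollahi and used as a black box in the proof of Theorem~\ref{thmcube}. So there is no ``paper's own proof'' to compare against; what follows is an assessment of your proposal on its own merits and against the standard argument in that reference.

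Your ``if'' direction is fine. The $2n-1$ upper bound for $\CC[x]/(x^n)$ via evaluation--interpolation is the classical argument, and your gluing of the $s$ optimal algorithms for $\CC[x]/(x^{d_i})$ along the shared constant-term product $\lambda\mu$ is exactly the standard construction; the arithmetic $1+\sum_i(2d_i-2)=2\dim A-1$ is correct.

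Your ``only if'' direction has the right skeleton (pass to $B=A/I$ for a one-dimensional socle ideal, use $\rank(A)\geq\rank(B)+2$ to conclude $B$ has minimal rank, apply induction, then lift) but the step you yourself flag as the ``main obstacle'' is a genuine gap, not a routine verification. Concretely: when $B$ is a generalised null algebra with generators $\bar w_1,\dots,\bar w_s$ and you lift to $w_1,\dots,w_s\in A$, adjusting the lifts by elements of $I$ does nothing to the products $w_iw_j$ (since $I$ lies in the socle), so you cannot simply ``fix'' a nonzero $w_iw_j\in I$ by choosing better lifts. Your claim that such a nonzero product ``should cost an additional product'' is a heuristic, not an argument; turning it into one requires a fine analysis of the structure of an optimal bilinear computation for $A$ (which products can be made to lie over $I$, how the remaining ones restrict to $B$, and why the equality $\rank(A)=\rank(B)+2$ forces the two ``extra'' products to interact with the generators in a way compatible with the null-algebra relations). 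In B\"urgisser--Clausen--Shokrollahi this is handled not by your quotient-and-lift induction but by a direct dissection of an optimal computation sequence (their Lemma~17.37 and the surrounding machinery), which pins down the algebraic form of the bilinear products in a minimal-rank computation and reads off the generalised-null or simply-generated structure from that. Your outline is plausible and close in spirit, but as written the hard implication is not proved.
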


\begin{lemma}[Nakayama's lemma]\label{Nakayama}
Let $A$ be an algebra such that $A/\rad A \cong \CC$. Then~$A$ can be generated as an algebra by $p \coloneqq \dim \rad A/(\rad A)^2$ elements in $\rad A$, that is, there are $w_1,\ldots,w_p\in \rad A$ such that $A = \CC\{w_1,\ldots,w_p\}$. This~$p$ is minimal.
\end{lemma}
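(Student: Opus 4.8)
The plan is to derive everything from nilpotence of the radical. Write $R \coloneqq \rad A$; as recalled above, $R$ is a two-sided nilpotent ideal, so $R^k = \{0\}$ for some $k \geq 1$. Since $A/R \cong \CC$ and the algebras to which the lemma is applied are unital, every $a \in A$ can be written $a = \lambda\cdot 1 + r$ with $\lambda \in \CC$, $r \in R$ (here $1$ is the identity of $A$); in particular $A = \CC\cdot 1 + R$, the ideal $R$ is proper, and $\dim R = \dim A - 1$. The quotient map $R \to R/R^2$ is surjective, so I would pick $w_1, \ldots, w_p \in R$ whose residues form a $\CC$-basis of $R/R^2$ (recall $p = \dim R/R^2$), set $V \coloneqq \CC w_1 + \cdots + \CC w_p \subseteq R$, so that $R = V + R^2$ by construction, and let $B \coloneqq \CC\{w_1, \ldots, w_p\} \subseteq A$ be the subalgebra they generate (which contains $1$). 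The goal is to show $B = A$.

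The core step is to push the relation $R = V + R^2$ up the radical filtration. Writing $V^j$ for the $\CC$-span of all length-$j$ products $w_{i_1}\cdots w_{i_j}$, I would prove by induction on $j \geq 1$ that $R^j \subseteq V^j + R^{j+1}$. The base case $j=1$ is the defining property of $V$. For the inductive step, from $R^j \subseteq V^j + R^{j+1}$ and $V^j \subseteq R^j$ one gets $R^{j+1} = R^j R \subseteq V^j R + R^{j+2}$, and substituting $R = V + R^2$ once more, together with $V^j R^2 \subseteq R^{j+2}$, gives $R^{j+1} \subseteq V^{j+1} + R^{j+2}$. Telescoping and using $R^k = \{0\}$ then yields $R = V + R^2 \subseteq V + V^2 + R^3 \subseteq \cdots \subseteq V + V^2 + \cdots + V^{k-1}$, and the right-hand side lies in $B$ since $B$ contains every product of the $w_i$. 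Hence $A = \CC\cdot 1 + R \subseteq B$, so $B = A$, which is the asserted generation $A = \CC\{w_1, \ldots, w_p\}$.

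For minimality, suppose $A = \CC\{u_1, \ldots, u_q\}$ with all $u_i \in R$. Then every element of $R$ is a $\CC$-combination of products of the $u_i$ of length $\geq 1$, and modulo $R^2$ all products of length $\geq 2$ vanish, so the residues of $u_1, \ldots, u_q$ span $R/R^2$. Thus $q \geq \dim R/R^2 = p$, and hence no generating subset of $\rad A$ has fewer than $p$ elements.

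The only genuinely load-bearing part is the induction in the second paragraph: it is the mechanism turning ``generators of $R/R^2$'' into ``generators of $A$'', and it is exactly there that one uses a \emph{uniform} nilpotence bound $R^k = \{0\}$ rather than merely that each element of $R$ is nilpotent — otherwise the telescoping would not terminate. Everything else (the vector-space decomposition $A = \CC\cdot 1 + R$, surjectivity of $R \to R/R^2$, $V^j \subseteq B$, and the mod-$R^2$ count for minimality) is routine.
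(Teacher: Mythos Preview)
Your proof is correct and follows essentially the same route as the paper's: lift a basis of $R/R^2$, show by induction along the radical filtration that the length-$r$ monomials in the $w_i$ span $R^r$ modulo $R^{r+1}$, and then use nilpotence of $R$ together with $A=\CC\oplus R$ to conclude. The only cosmetic difference is in the minimality step: the paper allows arbitrary generators $u_i\in A$ and first reduces (via $A=\CC\oplus R$) to the case $u_i\in R$, whereas you assume $u_i\in R$ from the outset; this is a one-line fix and does not affect the argument.
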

We repeat a proof found in \cite{buchi1984uber}.
\begin{proof}
Let $N\coloneqq \rad A$. Let $w_1,\ldots,w_p \in N$ such that $w_1+N^2, \ldots, w_p+N^2$ is a $\CC$-basis for $N/N^2$. One can show by induction that for any $r\geq1$,
\[
\{w_{i_1}\!\cdots w_{i_r} + N^{r+1} \mid 1\leq i_1,\ldots,i_r\leq p\}
\]
generates $N^r/N^{r+1}$ as a vector space over $\CC$. Using that $A/N\cong \CC$ (so $A = \CC \oplus N$) and that $N$ is nilpotent, we get
\[
A = \CC\{w_1,\ldots,w_p\}.
\]

Suppose $p$ is not minimal. Then there is a $q<p$ and a surjective morphism of algebras
\[
\phi : \CC\{X_1,\ldots,X_q\} \twoheadrightarrow A.
\]
Without loss of generality, we may assume that $\phi(X_i)$ is in $N$, since otherwise we can use the decomposition $A=\CC\oplus N$ to map $\phi(X_i)$ to $N$. The set
\[
\{\phi(X_i) + N^2 \mid 1\leq i \leq q \}
\]
is too small to generate $N/N^2$, and $\phi$ maps monomials of degree  $\geq2$ in $\CC\{X_1,\ldots,X_q\}$ to $N^2$. Therefore, $\phi$ is not surjective.
\end{proof}

\begin{proof}[\bfseries\upshape Proof of Theorem\nobreakspace \ref {thmcube}]
The W-state tensor is the structure tensor of the algebra $W\coloneqq\CC[x]/(x^2)$. Consider the algebra 
\[
A\coloneqq W^{\otimes 3} = \CC[x,y,z]/(x^2,y^2,z^2).
\]
It is not hard to see that $A$ is a local algebra with maximal ideal $(x,y,z)$. Let~$N$ be the radical $\rad A = (x,y,z)$. By the Alder-Strassen bound (Theorem\nobreakspace \ref {AlderStrassen}) we have
\[
\rank(A) \geq 2\dim A - 1 = 15.
\]
We will show that $A$ is not of minimal rank. The following type of argument has been used before by Büchi to compute ranks of certain local algebras of dimension at most~5~\cite{buchi1984uber}. Suppose $A$ has minimal rank. By Nakayama's lemma (Lemma\nobreakspace \ref {Nakayama}), the algebra $A$ can be generated by $\dim \rad A/(\rad A)^2 = 3$ elements and no fewer. Therefore, by Theorem\nobreakspace \ref {charact} our algebra $A$ is a generalised null algebra. Hence there are elements~$x_1,x_2,x_3 \in N$ with $x_1x_2=0$, $x_2x_3=0$, and~$x_1x_3=0$ such that $(x_1 + N^2, x_2+N^2, x_3+N^2)$ is a basis of $N/N^2$. On the other hand, $(x+N^2, y+N^2, z+N^2)$ is a basis of $N/N^2$. Therefore, there are elements~$A_{ij}\in \CC$ and $p_i\in N^2$ with
\begin{align*}
x_1 &= A_{11} x + A_{12}y + A_{13}z + p_1,\\
x_2 &= A_{21} x + A_{22}y + A_{23}z + p_2,\\
x_3 &= A_{31} x + A_{32}y + A_{33}z + p_3,
\end{align*}
and $\det A \neq 0$. We may assume that $A_{11}$ is nonzero. We have relations
\begin{align*}
0 = x_1x_2 &= (A_{11}A_{22} + A_{12}A_{21}) xy + (A_{11}A_{23} + A_{13}A_{21})xz\\  &\quad+ (A_{12}A_{23} +  A_{13}A_{22})yz + \textnormal{terms in $N^3$},\\
0 = x_1x_3 &= (A_{11}A_{32} + A_{12}A_{31}) xy + (A_{11}A_{33} + A_{13}A_{31})xz\\  &\quad+ (A_{12}A_{33} +  A_{13}A_{32})yz + \textnormal{terms in $N^3$}.
\end{align*}
Let
\begin{gather*}
f_1 \coloneqq A_{11}A_{22} + A_{12}A_{21},\quad f_2\coloneqq A_{11}A_{23} + A_{13}A_{21},\quad f_3\coloneqq A_{12}A_{23} + A_{13}A_{22},\\
g_1 \coloneqq A_{11}A_{32} + A_{12}A_{31},\quad g_2\coloneqq A_{11}A_{33} + A_{13}A_{31},\quad g_3\coloneqq A_{12}A_{33} +  A_{13}A_{32}.
\end{gather*}
Then we can rewrite the relations as $0 = f_1 = f_2 = f_3 = g_1 = g_2 = g_3.$
With the following Sage code we can compute the syzygy module of the ideal $I \coloneqq (\det(A),f_1, f_2, f_3, g_1, g_2, g_3)\, \CC[A_{ij}]$.

\begin{Verbatim}[framesep=5mm]
R = PolynomialRing(QQ, 3, var_array="a")
A = matrix(R, 3, 3, lambda i,j: "a%d%d" % (i,j))
var("a","b","c")
y = A * vector([a,b,c])
I = R.ideal(det(A),
"a00*a11+a01*a10","a00*a12+a02*a10","a01*a12+a02*a11",
"a00*a21+a01*a20","a00*a22+a02*a20","a01*a22+a02*a21")
L = I.syzygy_module()
print L.str()
\end{Verbatim}

\noindent
One of the syzygies is 
\begin{multline*}
-A_{11}\det(A) =  (A_{13}A_{31} - A_{11}A_{33}) f_1 +   (-3A_{12}A_{31} - A_{11}A_{32}) f_2 + 0 \cdot f_3\\ + 2A_{11}A_{23} g_1 + 2A_{12}A_{21} g_2 + 0 \cdot g_3,
\end{multline*}
implying $\det(A) = 0$, which is a contradiction.
\end{proof}

\paragraph{Acknowledgements} The author is grateful to Matthias Christandl, Markus Bläser and Florian Speelman for helpful discussions. Part of this work was done while the author was visiting the Simons Institute for the Theory of Computing, UC Berkeley and the Workshop on Algebraic Complexity Theory 2015, Saarbrücken. This work is supported by the Netherlands Organisation for Scientific Research (NWO), through the research programme 617.023.116, and by the European Commission, through the SIQS project.

\bibliographystyle{elsarticle-num}
\bibliography{all}

\end{document}